\def\leq{\leqslant}
\def\geq{\geqslant}
\def\le{\leqslant}
\def\ge{\geqslant}
\theoremstyle{plain}
\newtheorem{theorem}{Theorem}[section]
\newtheorem{proposition}{Proposition}[section]
\newtheorem{lemma}[proposition]{Lemma}
\newtheorem{corollary}[theorem]{Corollary}
\theoremstyle{remark}
\numberwithin{equation}{section}
\begin{document}
\vglue -5mm

\title[ON A SUM INVOLVING general ARITHMETIC FUNCTIONS]
{ON A SUM INVOLVING general ARITHMETIC FUNCTIONS AND THE INTEGRAL PART FUNCTION}
\author{Jing Ma \& Ronghui Wu}

\address{%
Jing Ma
\\
School of Mathematics
\\
Jilin University
\\
Changchun 130012
\\
P. R. China
}
\email{jma@jlu.edu.cn}

\address{
Ronghui Wu
\\
School of Mathematics
\\
Jilin University
\\
Changchun 130012
\\
P. R. China
}
\email{rhwu21@mails.jlu.edu.cn}

\date{\today}

\begin{abstract}Let $f$ be an arithmetic function satisfying some simple conditions. The aim of this paper is to establish an asymptotical
formula for the quantity
\[
S_f(x):=\sum_{n\leq x}\frac{f([x/n])}{[x/n]}
\]
as $x\rightarrow\infty$, where $[t]$ is the integral part of the real number $t$.
This generalizes some recent results of Bordell\`es, Dai, Heyman, Pan and  Shparlinski.
\end{abstract}

\keywords{General arithmetic function, Integral part, Asymptotic formula, Multiple exponential sums. }
	
\maketitle
	
\section{{Introduction}}

As usual, we denote by $\varphi(n)$ the Euler totient function, by $[t]$ the integral part of the real number $t$, by $\log_2$ the iterated logarithm and by $\gamma$ the Euler constant, respectively.
Motivated by the following well-known results
\begin{equation}\label{[x/n]}
\sum_{n\leq x}\left[\frac xn\right]=x\log x+(2\gamma-1)x+O(x^{1/3}),
\end{equation}
\begin{equation}\label{2}
\sum_{n\leq x}\varphi(n)=\frac 3{\pi^2}x^2+O(x(\log x)^{2/3}(\log_{2}x)^{4/3})
\end{equation}
as $x\rightarrow\infty$,
using Bourgain's new exponent pair \cite{J.Bour2017},
Bordell\`es Dai, Heyman, Pan and Shparlinski \cite{BDHPS2019}  proposed to investigate the asymptotical behaviour of the summation function
\[
 \sum_{n\leqslant x}\varphi\left(\left[\frac{x}{n}\right]\right)
\]
as $x\rightarrow\infty$. They proved
\begin{equation}\label{S}
\left(\frac {2629}{4009}\cdot\frac 6{\pi^2}+o(1)\right)x\log x
\leq 
 \sum_{n\leqslant x}\varphi\left(\left[\frac{x}{n}\right]\right)
\leq\left(\frac {2629}{4009}\cdot\frac 6{\pi^2}+\frac{1380}{4009}+o(1)\right)x\log x
\end{equation}
and conjectured that
\begin{equation}\label{S'}
 \sum_{n\leqslant x}\varphi\left(\left[\frac{x}{n}\right]\right)
 \sim\frac 6{\pi^2}x\log x
\end{equation}
as $x\rightarrow\infty$.

The bounds in \eqref{S} have been sharpened by Wu \cite{Wu2019} using the van der Corput inequality \cite{GrahamKolesnik1991}.
Zhai \cite{Zhai2020} resolved the conjecture \eqref{S'} by combining the Vinogradov method with an idea of Goswami \cite{A}.
He established the following asymptotic formula
\begin{equation}\label{zhai}
 \sum_{n\leqslant x}\varphi\left(\left[\frac{x}{n}\right]\right)
 =\frac 6{\pi^2}x\log x+O(x(\log x)^{2/3}(\log_{2}x)^{1/3})
\end{equation}
and showed that the error term in \eqref{zhai} is $\Omega(x)$.
For part of further related works see \cite{LWY-1,LWY-2+,MaSun,MaWu,MaWuzhao,Wu2020note,Zhai2020,ZW, O.Bordelles2020}.

Furthermore, for the Euler totient function $\varphi(n)$, 
Bordell\`es-Dai-Heyman-Pan-Shparlinski \cite[Corollary 2.4]{BDHPS2019} also derived that
$$
S_\varphi(x):=\sum_{n\leqslant x}\frac{\varphi([x/n])}{[x/n]}=x\sum_{m\ge 1}\frac{\varphi(m)}{m^{2}(m+1)}+O(x^{1/2}).
$$
Subsequently, Wu \cite{Wu2020note} sharpened the error term of \cite{BDHPS2019} and proved
\begin{equation}\label{1/3}
S_\varphi(x)=x\sum_{m\ge 1}\frac{\varphi(m)}{m^{2}(m+1)}+O(x^{1/3}\log x).
\end{equation}
Ma and Sun \cite{MS,MScor} generalized Wu's work by showing that
\begin{equation}\label{1''}
S_f(x):=
\sum_{n\leq x}\frac{f([x/n])}{[x/n]}=x\sum_{m\geq1}\frac{f(m)}{m^2(m+1)}+  O(x^{1/3}\log x)
\end{equation}
for $f\in \{ \varphi, \Psi,   \sigma, \beta \}$,
where $\varphi$ denotes the Euler totient function, $\Psi$ donotes the Dedekind function, $\sigma$ donotes the sum-of-divisors function,
and $\beta$ donotes the alternating sum-of-divisors function.

In this paper, we will consider a more general case of \eqref{1''}, and give a uniform treatment.
Define id$(n)=n$ and $\mathbb{1}(n)=1$ for $n\geq1$.
Let $f$ be an arithmetic function and $g$ be the arithmetic function such that $f=\textrm{id}\ast g$.
Our main result is as follows.

\begin{theorem}\label{thm1}
	Let $f =\textup{id}\ast g$ be an arithmetic function with
\begin{align}\label{<1}
	&g(n)\ll 1
\end{align}
for $n\geq1$.
Then,
	\begin{equation}\label{jielun1}
		\sum_{n\leq x}\frac{f([x/n])}{[x/n]}=x\sum_{m\geq1}\frac{f(m)}{m^2(m+1)}+O(x^{1/3}\log x)
	\end{equation}
	as $x\rightarrow\infty$.
\end{theorem}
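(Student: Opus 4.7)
The plan is to reparametrize the sum over $n\le x$ by $m=[x/n]$, extract the main term via $[t]=t-\{t\}$, use the Dirichlet convolution structure $f=\textup{id}\ast g$ to rewrite the remainder as a weighted sum of fractional-part differences on arithmetic progressions, and bound the latter by exponent-pair/hyperbola arguments of Voronoi type.

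Grouping $n\le x$ by the value $m=[x/n]$ gives
\[
S_f(x)=\sum_{m=1}^{[x]}\frac{f(m)}{m}\bigl([x/m]-[x/(m+1)]\bigr).
\]
Writing $[t]=t-\{t\}$ produces the identity $[x/m]-[x/(m+1)]=\tfrac{x}{m(m+1)}+\{x/(m+1)\}-\{x/m\}$, and hence
\[
S_f(x)=x\sum_{m\le[x]}\frac{f(m)}{m^{2}(m+1)}+E(x),\quad E(x):=\sum_{m=1}^{[x]}\frac{f(m)}{m}\bigl(\{x/(m+1)\}-\{x/m\}\bigr).
\]
The hypothesis $g(n)\ll 1$ yields $f(m)\ll\sigma(m)\ll m\log m$, so $\sum_{m>[x]}\frac{f(m)}{m^{2}(m+1)}\ll\frac{\log x}{x}$ and the main sum extends to all $m\ge1$ at the cost of an $O(\log x)$ error, matching the asserted closed form modulo $E(x)$.

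For $E(x)$, the identity $f(m)/m=\sum_{d\mid m}g(d)/d$ and a swap of summation give
\[
E(x)=\sum_{d\le x}\frac{g(d)}{d}\,R_d(x),\qquad R_d(x):=\sum_{k=1}^{[x/d]}\bigl(\{x/(dk+1)\}-\{x/(dk)\}\bigr).
\]
As the $\tfrac12$'s in $\psi(t)=\{t\}-\tfrac12$ cancel, each $R_d(x)$ is a difference of two $\psi$-sums along the arithmetic progressions $\{dk\}_{k}$ and $\{dk+1\}_{k}$; the case $d=1$ telescopes to $R_1(x)=\psi(x/([x]+1))-\psi(x)=O(1)$. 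For $d\ge 2$ I plan to apply Vaaler's trigonometric approximation to $\psi$ with truncation $H$ and estimate the resulting exponential sums $\sum_{k\le[x/d]}e(hx/(dk+a))$ ($a\in\{0,1\}$) by the van der Corput / exponent pair method (the phase $\phi(k)=hx/(dk+a)$ has $|\phi^{(j)}|\asymp hx/(dk)^{j+1}$); optimizing $H$ should produce the uniform bound $|R_d(x)|\ll (x/d)^{1/3}\log x$, the exponent $1/3$ mirroring Voronoi's $\Delta(y)\ll y^{1/3}\log y$ for the Dirichlet divisor problem.

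Granting this bound and splitting at $D=x^{2/3}$, with the trivial $|R_d(x)|\le x/d$ for $d>D$,
\[
|E(x)|\ll x^{1/3}\log x\sum_{d\ge 1}\frac{1}{d^{4/3}}+x\sum_{d>D}\frac{1}{d^{2}}\ll x^{1/3}\log x,
\]
which together with the main term yields \eqref{jielun1}. The chief obstacle is precisely the uniform $(x/d)^{1/3}$ bound on $R_d(x)$: the trivial estimate gives only $x/d$, and a naive second-derivative van der Corput treatment yields $(x/d)^{1/2}$, recovering just the $O(x^{1/2})$ error of \cite{BDHPS2019}. Producing the exponent $1/3$ uniformly in both the common difference $d$ and the Vaaler frequency $h$ is the crucial technical input drawn from the Voronoi theory of the divisor problem.
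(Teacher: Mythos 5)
Your reduction of the problem to bounding $E(x)=\sum_{d\le x}\frac{g(d)}{d}R_d(x)$ with $R_d(x)=\sum_{k\le x/d}\bigl(\psi(x/(dk+1))-\psi(x/(dk))\bigr)$ is correct, and so is the final summation over $d$ \emph{granted} the uniform bound $|R_d(x)|\ll (x/d)^{1/3}\log x$. But that bound is where the entire proof lives, and the route you sketch for it --- Vaaler's approximation followed by van der Corput applied to $\sum_{k\le x/d}e\bigl(hx/(dk+a)\bigr)$ for $a=0$ and $a=1$ \emph{separately} --- fails in the upper range of $k$. For $k\sim K$ with $dK\asymp x$ the phase $hx/(dk+a)$ has first derivative of size $h/x$ and smaller higher derivatives, so no derivative test or exponent pair extracts any cancellation; worse, each individual $\psi$-sum is genuinely large there: for instance
\[
\sum_{x/(2d)<k\le x/d}\psi\Bigl(\frac{x}{dk}\Bigr)=\Bigl(\log 2-\tfrac{3}{4}\Bigr)\frac{x}{d}+O(1),
\]
so $\sum_{k\le x/d}\psi(x/(dk+a))$ is of order $x/d$, not $(x/d)^{1/3}$, for each fixed $a$. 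The saving you need lives only in the \emph{difference} of the two progressions $dk$ and $dk+1$ (as your $d=1$ telescoping already hints), and your plan of estimating the two exponential sums independently never accesses it. The analogy with Voronoi's $\Delta(y)\ll y^{1/3}\log y$ is also not a fair guide: that exponent depends on the hyperbola symmetry to truncate the $\psi$-sum at $y^{1/2}$, and there is no such symmetry here for $d\ge 2$. So the key estimate is asserted, not proved, and the proposal does not close.

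The paper avoids this obstruction by truncating \emph{before} any analysis: the range $n\le N_f=x^{1/3}$ is bounded trivially by $N_f\log x$, so the surviving $\psi$-sums involve only $m\le x/N_f=x^{2/3}$. In that range the problematic ``small derivative'' contribution is the term $x^{-1}D^2\le x^{1/3}$ of Proposition \ref{proposition}, and each $\psi$-sum can then be bounded on its own by opening $f=\textup{id}\ast g$ into a bilinear form and applying Vaaler plus the exponent pair $(1/2,1/2)$ to the inner variable. To repair your argument you would either have to impose the same cutoff $dk\le x^{2/3}$ (handling the complementary range of $m$ by the trivial $O(x^{1/3}\log x)$ bound before swapping the divisor sum), or genuinely exploit the pairing, e.g.\ by treating $R_d$ as the lattice-point count $\sum_k\bigl([x/(dk)]-[x/(dk+1)]\bigr)$ minus its smooth approximation; either way the proof you would end up writing is essentially the paper's rather than the one you have outlined.
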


More generally, we have

\begin{theorem}\label{thm2}
Let $f =\textup{id}\ast g$ be an arithmetic function with 
\begin{align}\label{3''}
&g(n)\ll_\varepsilon n^\varepsilon
\end{align}
for $n\geq1$. 
Then,
\begin{equation}\label{jielun2}
\sum_{n\leq x}\frac{f([x/n])}{[x/n]}=
	 x\sum_{m\geq1}\frac{f(m)}{m^2(m+1)}+O(x^{1/3+\varepsilon})
\end{equation}
as $x\rightarrow\infty$ for any $\varepsilon >0$.
\end{theorem}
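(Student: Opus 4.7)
The plan is to adapt the strategy of Wu~\cite{Wu2020note} and Ma--Sun~\cite{MS}, developed there for the specific cases $f \in \{\varphi, \Psi, \sigma, \beta\}$. The first step is to reorganize the sum by grouping $n$'s sharing a common value $m = [x/n]$. Since exactly $[x/m] - [x/(m+1)]$ integers $n$ satisfy $[x/n] = m$,
\[
S_f(x) = \sum_{m \leq x} \frac{f(m)}{m}\bigl([x/m] - [x/(m+1)]\bigr).
\]
Writing $[t] = t - \psi(t)$ with $\psi(t) = t - [t]$, the bracket becomes $\frac{x}{m(m+1)} + \psi(x/(m+1)) - \psi(x/m)$. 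Since $g(n) \ll_\varepsilon n^\varepsilon$ together with $f = \textup{id} \ast g$ forces $f(m) \ll m^{1+\varepsilon}$, the tail $\sum_{m > x} f(m)/(m^2(m+1)) \ll x^{-1+\varepsilon}$ is negligible and one obtains
\[
S_f(x) = x \sum_{m \geq 1} \frac{f(m)}{m^2(m+1)} + \mathcal{E} + O(x^\varepsilon),
\]
where $\mathcal{E} := \sum_{m \leq x} (f(m)/m)\bigl[\psi(x/(m+1)) - \psi(x/m)\bigr]$.

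The remainder is devoted to proving $\mathcal{E} \ll x^{1/3+\varepsilon}$. Using $f(m)/m = \sum_{d \mid m} g(d)/d$ (immediate from $f = \textup{id} \ast g$) and swapping summation,
\[
\mathcal{E} = \sum_{d \leq x} \frac{g(d)}{d}\, T_d, \qquad T_d := \sum_{k \leq x/d}\bigl[\psi(x/(dk+1)) - \psi(x/(dk))\bigr].
\]
Split the $d$-range at a threshold $D$. For $d > D$, the trivial bound $|T_d| \leq 2 \lfloor x/d \rfloor$ gives
\[
\sum_{d > D} \frac{|g(d)|}{d}\, |T_d| \ll x \sum_{d > D} d^{-2+\varepsilon} \ll x D^{-1+\varepsilon}.
\]

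For $d \leq D$, the crucial input is the nontrivial estimate $|T_d| \ll (x/d)^{1/3+\varepsilon}$, which is the main technical obstacle. Bounding $A_d := \sum_{k \leq x/d} \psi(x/(dk))$ and $B_d := \sum_{k \leq x/d} \psi(x/(dk+1))$ separately fails: by the classical identity $\sum_{v \leq N} \psi(N/v) = (1-\gamma) N + O(N^{1/3})$, each is already of size $\asymp x/d$, so the $(x/d)^{1/3}$-saving must be extracted from the cancellation in $T_d = B_d - A_d$. The standard route is to apply Vaaler's smoothed truncated Fourier expansion
\[
\psi(t) = -\sum_{0 < |h| \leq H} \phi_h\, e^{2\pi i h t} + O\!\left(\min\bigl(1,\, 1/(H \|t\|)\bigr)\right), \qquad |\phi_h| \ll 1/h,
\]
reducing $T_d$ to weighted exponential sums $\sum_{k \leq x/d}[e^{2\pi i h x/(dk+1)} - e^{2\pi i h x/(dk)}]$. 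Each of these is handled by the van der Corput / exponent pair method applied to the smooth phase $hx/(dk+c)$, whose derivatives satisfy $|\partial_k^j[hx/(dk+c)]| \asymp hx/(dk)^{j+1}$; after summing over $h \leq H$ and optimizing the truncation height, the exponent pair $(1/6, 2/3)$ (or any sufficiently strong pair) delivers the required bound on $|T_d|$.

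Substituting this back, $\sum_{d \leq D} (|g(d)|/d)\, |T_d| \ll x^{1/3+\varepsilon} \sum_{d \leq D} d^{-4/3+\varepsilon} \ll x^{1/3+\varepsilon}$. Choosing $D = x^{2/3}$ balances both error contributions at $x^{1/3+\varepsilon}$ and yields $\mathcal{E} \ll x^{1/3+\varepsilon}$, completing the proof. The hard part is the nontrivial bound on $T_d$: all the oscillatory analysis is concentrated in extracting the cancellation produced by the unit shift $dk \mapsto dk+1$, and this is precisely what forces the $x^{1/3}$ scale in the final error term.
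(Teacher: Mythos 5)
Your opening reduction (grouping by $m=[x/n]$, extracting the main term, writing $\mathcal{E}=\sum_{d}\frac{g(d)}{d}T_d$ and splitting at $D=x^{2/3}$) is sound, but the proof has a genuine gap at its crux: the bound $|T_d|\ll (x/d)^{1/3+\varepsilon}$, uniformly in $d$, is asserted rather than proved, and the method you sketch cannot deliver it. You correctly note that $A_d$ and $B_d$ are each of size $\asymp x/d$, so the saving must come from cancellation between the two shifted terms. Yet the argument you then outline --- Vaaler's expansion followed by exponent pairs applied to each exponential sum $\sum_k e(hx/(dk+c))$ individually --- ends up bounding the two pieces separately after all. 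This provably fails in the range $dk\asymp x$ (i.e.\ $k$ near $x/d$): there the total phase variation of $hx/(dk+c)$ over a dyadic block is only $\asymp h$, so the exponent-pair/Kuzmin--Landau estimates contribute a term $\asymp K/h$ with $K\asymp x/d$, which after summing over $h$ gives back the trivial bound $x/d$; this is precisely the $x^{-1}D^{2}$ term in the paper's Proposition 3.1, which is useless when $D\asymp x$. In other words, exactly where the cancellation between $\psi(x/(dk+1))$ and $\psi(x/(dk))$ is indispensable, your argument does not exploit it. Note moreover that, up to $O(1)$, $T_d$ equals $\#\{n\le x: d\mid [x/n]\}$ minus a smooth main term, so the uniform claim $|T_d|\ll(x/d)^{1/3+\varepsilon}$ is a statement of essentially the same depth as the theorem itself and cannot be waved through; even the naive use of $|e(hx/(dk+1))-e(hx/(dk))|\ll\min(1,hx/(dk)^{2})$ in the bad range does not immediately close it for small $d$.

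The paper sidesteps this difficulty entirely by a preliminary truncation: the terms with $n\le N_f=x^{1/3}$ are discarded at trivial cost $O(N_f x^{\varepsilon})$, which automatically restricts to $m=[x/n]\le x/N_f$, so the argument of $\psi$ is always $\ge x^{1/3}$ and oscillates genuinely throughout. The two sums $R_0^f$ and $R_1^f$ are then bounded \emph{separately} (no cancellation between the shifts $\delta=0,1$ is needed) via dyadic decomposition, the identity $f(m)/m=\sum_{d\mid m}g(d)/d$, Vaaler's lemma and the exponent pair $(1/2,1/2)$. If you insert the same truncation --- handle $m>x/N_f$ (equivalently $n\le x^{1/3}$) trivially before expanding --- the problematic range of $k$ disappears and your scheme reduces to the paper's; without it, you must genuinely extract the cancellation produced by the unit shift when $dk\asymp x$, which your sketch does not do.
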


Notice that
%From the define of some special arithmetic functions, we have $
$$
\varphi=\textrm{id} \ast\mu,\qquad
\sigma=\textrm{id} \ast \mathbb{1}, \qquad
\beta=\textrm{id} \ast (-1)^{\Omega(n)},\qquad
\Psi=  \textrm{id} \ast \mu^2,
$$
and each of
$\mu(n)$, $\mathbb{1}(n)$, $(-1)^{\Omega(n)}$ and $\mu^2(n)$ satisfies \eqref{3''}.
Therefore, Theorem \ref{thm1} implies the  main theorem in \cite{MS,MScor}.

Let $\mathbb{P}$ be the set of all primes and define
\begin{equation}\label{P}
\mathbb{1}_{\mathbb{P}}(n)=\begin{cases}
1, & \text{if } n\in\mathbb{P},\\
0, & \text{otherwise}.
\end{cases}
\end{equation}
Then $\mathbb{1}_{\mathbb{P}}(n) \ll 1$.
Let $f=\textup{id} \ast \mathbb{1}_{\mathbb{P}}$.
Then Theorem \ref{thm1} implies the following corollary.

\begin{corollary}
Let $f=\textup{id} \ast \mathbb{1}_{\mathbb{P}}$ and $ \mathbb{1}_{\mathbb{P}}$ is defined in \eqref{P}.
Then 
\[
\sum_{n\leqslant x}\frac{f([x/n])}{[x/n]}
= x\sum_{m\ge 1}\frac{f(m)}{m^{2}(m+1)} + O(x^{1/3}\log x)
\]
as $x\rightarrow \infty$.
\end{corollary}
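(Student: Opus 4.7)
The plan is short because the corollary is a direct specialization of Theorem~\ref{thm1}. First, I would verify that the hypothesis of Theorem~\ref{thm1} is satisfied. Given the Dirichlet factorization $f = \textup{id} \ast \mathbb{1}_{\mathbb{P}}$, the arithmetic function $g$ appearing in Theorem~\ref{thm1} is $g = \mathbb{1}_{\mathbb{P}}$; since $\mathbb{1}_{\mathbb{P}}(n)\in\{0,1\}$ one trivially has $g(n)\ll 1$, which is precisely hypothesis \eqref{<1}.

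Second, I would briefly check that the series on the right-hand side of \eqref{jielun1} converges for this particular $f$, so that the main term is well defined. Writing the convolution out,
\[
f(m) = \sum_{d\mid m} d\, \mathbb{1}_{\mathbb{P}}(m/d) = m \sum_{\substack{p \mid m \\ p \in \mathbb{P}}} \frac{1}{p} \ll m \log_2 m,
\]
where the final bound uses $\sum_{p\mid m} 1/p \leq \sigma(m)/m \ll \log_2 m$. Consequently $f(m)/(m^2(m+1)) \ll \log_2 m / m^2$, and the series converges absolutely.

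Finally, I would invoke Theorem~\ref{thm1} with this choice of $g$ to conclude the stated asymptotic formula with error $O(x^{1/3}\log x)$. There is no serious obstacle to overcome here: the entire analytic difficulty has been relegated to the proof of Theorem~\ref{thm1}, which is designed to handle any $g$ satisfying \eqref{<1} uniformly, and the only content specific to the corollary is the one-line verification that the indicator function of the primes is bounded by $1$.
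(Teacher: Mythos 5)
Your proposal is correct and matches the paper exactly: the paper likewise observes that $\mathbb{1}_{\mathbb{P}}(n)\ll 1$ and deduces the corollary immediately from Theorem~\ref{thm1}. Your additional check that the series $\sum_{m\ge 1} f(m)/(m^2(m+1))$ converges is a harmless (and correct) extra verification that the paper leaves implicit.
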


\noindent\textbf{Notations.}
We write $e(t)$ for $e^{2\pi it}$, $m\sim M$ for  $M<m\leqslant 2M$ and $\psi(t) =\{t\}-\frac{1}{2}$, where $\{t\}$ denotes the fractional part of the real number $t$.

\vskip 5mm

\section{Some lemmas}

In this section, we will introduce  two lemmas which will be needed in our proof.
The first one is due to Valaler (\cite [Theorem A.6] {GrahamKolesnik1991}).

\begin{lemma}\label{lem:1}
For~$x\geqslant{}1$ and ~$H\geqslant{}1$, we have
$$\psi(x)=-\sum_{1\leqslant{} |h|\leqslant{} H} \Phi\left(\frac{h}{H+1}\right)\frac{e(hx)}{2\pi ih}+R_H(x),$$where~$\Phi(t):=\pi t(1-|t|)\cot(\pi t)+|t|$, and the error term ~$R_H(x)$ satisfies
\begin{equation}\label{RH}
|R_H(x)|\leqslant{}\frac{1}{2H+2}\sum_{|h|\leqslant{} H}\left(1-\frac{|h|}{H+1}\right)e(hx).
\end{equation}
\end{lemma}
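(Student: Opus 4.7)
The statement is Vaaler's classical approximation identity for the sawtooth $\psi$, and my plan is to prove it by the Beurling--Selberg extremal function method in the form carried out by Vaaler himself.

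The first step is to construct entire functions $B^{\pm}(x)$ of exponential type at most $2\pi(H+1)$ satisfying $B^{-}(x)\leq \psi(x)\leq B^{+}(x)$ for every real $x$, with difference of minimal $L^{1}$-norm $1/(H+1)$. These are obtained by rescaling, by the factor $H+1$, the majorant/minorant of the sawtooth built from Vaaler's entire interpolation function for $\operatorname{sgn}$. The Paley--Wiener theorem then tells us that the Fourier transforms $\widehat{B^{\pm}}$ are supported in $[-(H+1), H+1]$, so after periodization only frequencies $h$ with $|h|\leq H$ survive.

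The second step is a Poisson summation computation. Setting $V^{*}=\tfrac{1}{2}(B^{+}+B^{-})$ and $M^{*}=\tfrac{1}{2}(B^{+}-B^{-})$, one has $|\psi(x)-V^{*}(x)|\leq M^{*}(x)$ pointwise, and Poisson summation gives
\[
\sum_{n\in\Z} V^{*}(x+n) = \sum_{|h|\leq H} \widehat{V^{*}}(h)\,e(hx), \qquad \sum_{n\in\Z} M^{*}(x+n) = \sum_{|h|\leq H} \widehat{M^{*}}(h)\,e(hx).
\]
The algebraic crux is computing these Fourier transforms explicitly: $\widehat{V^{*}}(h)$ should come out to $-\Phi(h/(H+1))/(2\pi i h)$ with $\Phi(t)=\pi t(1-|t|)\cot(\pi t)+|t|$, while $\widehat{M^{*}}(h)$ should equal $\tfrac{1}{2H+2}(1-|h|/(H+1))$. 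Since the $\Z$-periodization of $\psi$ is $\psi$ itself, the first identity, together with $\widehat{V^*}(0)=0$ (which must be verified as part of the computation), yields the main identity of the lemma, and the pointwise sandwich bound $|R_{H}(x)|\leq \sum_{n}M^{*}(x+n)$ combined with the second identity yields exactly \eqref{RH}, recognised as the Fejér kernel of order $H$.

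The main obstacle is the explicit Fourier bookkeeping inside the Beurling--Selberg construction: one must carefully track Vaaler's interpolation formula for $\operatorname{sgn}$ through both the rescaling by $H+1$ and the periodization in order to extract the closed form of $\Phi$ and to recognise the Fejér-kernel shape of the majorant of $R_{H}$. Once those two Fourier transforms are computed, the rest---the sandwich inequality, the Paley--Wiener support statement, and the periodization---is essentially formal.
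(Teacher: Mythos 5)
You should first note that the paper does not prove this lemma at all: it is quoted verbatim as Theorem A.6 of Graham--Kolesnik and is Vaaler's classical theorem, so there is no in-paper argument to compare yours against; your plan is essentially the standard Beurling--Selberg--Vaaler proof from that source, and it identifies all the right ingredients (extremal functions of exponential type $2\pi(H+1)$, Paley--Wiener truncation of the spectrum, Poisson summation, and the explicit transforms producing $\Phi$ and the Fej\'er kernel). Your treatment of the error term via $M^{*}=\tfrac12(B^{+}-B^{-})$ is sound, since $M^{*}\geq 0$ is integrable with $\widehat{M^{*}}(0)=\tfrac{1}{2H+2}$. The one step that fails as written is Poisson summation applied to $V^{*}=\tfrac12(B^{+}+B^{-})$: because $|V^{*}-\psi|\leq M^{*}$ and $\psi$ is bounded and periodic, $V^{*}$ is a bounded entire function with no decay at infinity, so it is not in $L^{1}(\R)$, the quantity $\widehat{V^{*}}(h)$ is undefined in the classical sense, and $\sum_{n\in\Z}V^{*}(x+n)$ diverges. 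Vaaler's actual argument avoids this by never periodizing a majorant of $\psi$ itself: the main term is built directly from his integrable interpolation function $J=\tfrac12 H'$ of exponential type $2\pi$ (with $\widehat{J}(t)=\pi t(1-|t|)\cot(\pi t)+|t|$ for $|t|\leq 1$ and $\widehat{J}(t)=0$ otherwise), essentially by band-limiting the distributional identity $\psi'=1-\sum_{n}\delta(\cdot-n)$, and the sandwich by the Beurling majorant/minorant of $\operatorname{sgn}$ is used only to bound the resulting error by $\tfrac12\sum_{n}K\bigl((H+1)(x+n)\bigr)$ with $K(y)=(\sin\pi y/(\pi y))^{2}$, whose periodization is exactly the Fej\'er bound \eqref{RH}. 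So your strategy is the right one, but the central analytic step must be rerouted through integrable objects rather than through a Fourier transform of $V^{*}$.
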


The second one is  a direct corollary  of \cite [Proposition 3.1] {LWY-1}.

\begin{lemma}\label{lem:2}
Let $\alpha,\beta,\gamma>0$ and $\delta\in \mathbb{R}$ be some constants. For $X>0$, $H,M,N\geqslant 1$, define
\begin{equation}\label{S_delta}
S_{\delta}=S_{\delta}(H,M,N):=\sum_{h\sim H}\sum_{m\sim M}\sum_{n\sim N}a_{h,n}b_{m}e\left(X\frac{M^{\beta }N^{\gamma}}{H^{\alpha}}\frac{h^{\alpha}}{m^{\beta}n^{\gamma}+\delta}\right),
\end{equation}
where $a_{h,m}, b_{n} \in \mathbb{C}$ such that $|a_{h,m}|,|b_{n}|\le 1$.
For any $\epsilon>0$, we have
\begin{equation}\label{S_delta-z}
S_{\delta}\ll \big((X^{\kappa}H^{2+\kappa}M^{1+\kappa+\lambda}N^{2+\kappa})^{1/(2+2\kappa)}+HM^{1/2}N+H^{1/2}MN^{1/2}+X^{-1/2}HMN\big)X^{\varepsilon}
\end{equation}
uniformly for $H\leqslant N^{\gamma-1}M^{\beta}$  and $0\leqslant \delta\leqslant 1/\varepsilon$, where $(\kappa,\lambda)$ is an exponent pair and the implied constant depends on $(\alpha,\beta,\gamma,\varepsilon)$ only.
\end{lemma}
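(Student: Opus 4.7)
The plan is to derive this lemma as a direct specialization of \cite[Proposition 3.1]{LWY-1}, which is explicitly flagged in the paragraph preceding the statement. The task therefore reduces to matching parameters rather than proving a triple exponential sum bound from first principles. Concretely, I would identify the phase $X(M^{\beta}N^{\gamma}/H^{\alpha})\,h^{\alpha}/(m^{\beta}n^{\gamma}+\delta)$ with the normalized phase in that proposition: the prefactor $M^{\beta}N^{\gamma}/H^{\alpha}$ is chosen so that the phase has magnitude $\asymp X$ for $(h,m,n)\sim(H,M,N)$, and $\delta$ plays the role of a bounded perturbation of the denominator. The hypothesis $0\leqslant\delta\leqslant 1/\varepsilon$ is then the natural one: it ensures $m^{\beta}n^{\gamma}+\delta\asymp m^{\beta}n^{\gamma}$, so that the smoothness and derivative estimates underlying the proposition go through unchanged.

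Even though I would not reprove the bound here, the four terms in \eqref{S_delta-z} admit a standard interpretation worth flagging. The dominant term $(X^{\kappa}H^{2+\kappa}M^{1+\kappa+\lambda}N^{2+\kappa})^{1/(2+2\kappa)}$ arises from a Cauchy--Schwarz step that pairs up the variables whose coefficient is less structured (here $(h,n)$, since $a_{h,n}$ depends on both indices) while squaring out $b_{m}$, followed by a Weyl-type shift and an application of the exponent pair $(\kappa,\lambda)$ to the resulting one-variable exponential sum in $m$. The terms $HM^{1/2}N$ and $H^{1/2}MN^{1/2}$ account for the diagonal contributions produced by Cauchy--Schwarz (where the paired indices coincide), while $X^{-1/2}HMN$ is the trivial/first-derivative fallback bound that takes over in the parameter regime where the phase is too small for the exponent-pair estimate to gain anything.

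The constraint $H\leqslant N^{\gamma-1}M^{\beta}$ is what guarantees that the partial derivative of the phase with respect to $m$ has the expected order of magnitude when the exponent pair is invoked, so that the normalization $X$ in \eqref{S_delta-z} is the correct effective size. The only real obstacle in carrying out the deduction is bookkeeping: confirming that the normalization conventions of \cite{LWY-1} line up with those of the present paper, and in particular checking that the perturbation $\delta$ entering through the denominator $m^{\beta}n^{\gamma}+\delta$ is absorbed by the smoothness hypotheses of the quoted proposition rather than introducing a new error term.
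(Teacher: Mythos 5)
Your proposal matches the paper exactly: the paper offers no proof of this lemma at all, simply declaring it ``a direct corollary of \cite[Proposition 3.1]{LWY-1}'', which is precisely the specialization you describe. Your additional commentary on the provenance of the four terms and the role of the constraints $H\leqslant N^{\gamma-1}M^{\beta}$ and $0\leqslant\delta\leqslant 1/\varepsilon$ goes beyond what the paper records but is consistent with the standard Cauchy--Schwarz plus exponent-pair argument underlying the cited result.
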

%Note that this lemma is actually a direct corollary of  \cite [Proposition  3.1] {LWY-1}.

\vskip 5mm

\section{A key estimate}

Define
$$
\mathfrak{S}_{\delta}^{f}(x,D):=\sum_{D<d\leqslant 2D}\frac{f(d)}{d}\psi\left(\frac{x}{d+\delta}\right).
$$
%Denote by $\mu(n)$ the M\"{o}bius function and by $\Omega(n)$ the number of all prime factors of $n$, respectively.
From $f=\textrm{id}\ast g$, we  get
\begin{equation}\label{f(d)/d}
\frac{f(d)}{d}=\sum_{m|d}\frac{g(m)}{m},
\end{equation}
so that  we can decompose $\mathfrak{S}_{\delta}^{f}(x,D)$ into  bilinear forms
\begin{equation}\label{bliinear forms}
\mathfrak{S}_{\delta}^{f}(x,D)=\sum_{D<mn\leq 2D}\frac {g(m)}m\psi\left(\frac{x}{mn+\delta}\right).
\end{equation}
In this section we will get the following estimation of $\mathfrak{S}_{\delta}^{f}(x,D)$ which will  play a key role in the proof of the main theorem.

\begin{proposition}\label{proposition}
Let $f =\textup{id}\ast g$ be an arithmetic function, 
$\delta\ge 0$ be a fixed constant, 
$(\kappa, \lambda)$ be an exponent pair.
%For any  $0\leqslant \delta\leqslant \varepsilon^{-1}$, we have
Then
\begin{equation}\label{key estimate}
	\mathfrak{S}_{\delta}^{f}(x,D)\ll
	(x^\kappa D^{-\kappa+\lambda})^{{1}/(1+\kappa)}      + x^\kappa D^{  -2\kappa+\lambda} \log x     + x^{-1}D^{2}
\end{equation}
uniformly for  $1\leqslant D\leqslant x$, if $g(n)\ll 1$; 
\begin{equation}\label{key estimate2}
	\mathfrak{S}_{\delta}^{f}(x,D)\ll
	D^\varepsilon\big(    (x^\kappa D^{-\kappa+\lambda})^{{1}/(1+\kappa)}      + x^\kappa D^{  -2\kappa+\lambda} \log x     + x^{-1}D^{2}    \big)
\end{equation}
uniformly for  $1\leqslant D\leqslant x$, if $g(n)\ll 1$. 
%Furthermore, if $(\kappa, \lambda)\neq (1/2, 1/2)$,
%then the factor $\log x$ can be omitted.
\end{proposition}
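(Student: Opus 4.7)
The plan is to combine Vaaler's approximation (Lemma~\ref{lem:1}) with the bilinear form \eqref{bliinear forms} and to bound the resulting triple exponential sum by means of Lemma~\ref{lem:2}. Starting from \eqref{bliinear forms} and inserting Lemma~\ref{lem:1} with a truncation parameter $H\ge 1$ (to be optimised later), I would write $\mathfrak{S}_\delta^f(x,D)$ as
\[
-\sum_{1\le|h|\le H}\frac{\Phi(h/(H+1))}{2\pi ih}\sum_{D<mn\le 2D}\frac{g(m)}{m}\,e\!\left(\frac{hx}{mn+\delta}\right)+\mathcal R,
\]
where $\mathcal R$ collects the contribution of $R_H$. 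Under the hypothesis $g\ll 1$ one has $\sum_{d\sim D}|f(d)|/d\ll D\log D$, so the $h=0$ part of $\mathcal R$ contributes $O(D\log D/H)$; the remaining Fourier modes of $R_H$ share the analytic shape of the main sum and can be absorbed into the analysis below.

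Next I would decompose dyadically $h\sim H_0$, $m\sim M$, $n\sim N$ with $H_0\le H$ and $MN\asymp D$. Abel summation absorbs the smooth weights $\Phi(h/(H+1))/h$ and $g(m)/m$ at a cost of $O(1/(H_0M))$ per dyadic box, so the matter reduces to bounding
\[
T(H_0,M,N):=\sum_{h\sim H_0}\sum_{m\sim M}\sum_{n\sim N}a_h\,b_m\,e\!\left(\frac{hx}{mn+\delta}\right),\qquad |a_h|,|b_m|\le 1.
\]
Lemma~\ref{lem:2} with $\alpha=\beta=\gamma=1$ and $X=xH_0/D$ applies directly whenever the validity condition $H_0\le M$ holds. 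In the range $M<H_0\le N$ I would interchange the roles of $m$ and $n$ (so that the $g$-coefficient is transferred into the $a_{h,n'}$ slot permitted by Lemma~\ref{lem:2}) and apply it with the dual constraint $H_0\le N$. The residual regime $H_0>\max(M,N)$ forces $H_0>\sqrt{D}$; it is treated by a trivial bound that ultimately merges with the $X^{-1/2}HMN$ branch of \eqref{S_delta-z} to produce the term $x^{-1}D^2$ in \eqref{key estimate}.

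Assembling the four contributions of \eqref{S_delta-z} over all dyadic ranges, combining with the Vaaler remainder $O(D\log D/H)$, and choosing $H$ so as to balance the leading term against this remainder, one recovers the first term $(x^\kappa D^{-\kappa+\lambda})^{1/(1+\kappa)}$ of \eqref{key estimate}; the secondary terms $HM^{1/2}N$ and $H^{1/2}MN^{1/2}$ of \eqref{S_delta-z} give $x^\kappa D^{-2\kappa+\lambda}\log x$ after summation, while the trivial regime together with $X^{-1/2}HMN$ yields $x^{-1}D^2$. The main difficulty will be the bookkeeping of the various dyadic regimes --- in particular the transition between $H_0\le M$, $H_0\le N$ and $H_0>\sqrt D$ --- together with the verification that each secondary branch of \eqref{S_delta-z} stays within the three claimed contributions. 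For the strengthened hypothesis $g(n)\ll_\varepsilon n^\varepsilon$, the only change is the normalisation $|b_m|=|g(m)|\le M^\varepsilon\le D^\varepsilon$: extracting this factor $D^\varepsilon$ before invoking Lemma~\ref{lem:2} inflates the final bound uniformly by $D^\varepsilon$, giving \eqref{key estimate2}.
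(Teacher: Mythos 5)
Your plan takes a genuinely different and much heavier route than the paper, and as written it does not close. The paper never forms a triple exponential sum and never invokes Lemma~\ref{lem:2}: it writes $\mathfrak{S}_{\delta}^{f}(x,D)=\sum_{m\le 2D}\frac{g(m)}{m}\sum_{D/m<n\le 2D/m}\psi\big(\frac{x}{mn+\delta}\big)$, applies Lemma~\ref{lem:1} to the inner sum for each \emph{fixed} $m$ with a truncation $H=H(m)$ optimised over $[1,D/m]$, and estimates the one-dimensional sum $\sum_{D/m<n\le 2D/m}e(hx/(mn+\delta))$ directly by the exponent pair $(\kappa,\lambda)$ (phase with derivative of size $hxm/D^{2}$ over an interval of length $D/m$). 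Summing over $h\le H$ with the weight $1/h$, optimising $H$, and only then summing over $m$ with the weight $g(m)/m$ gives exactly the three terms of \eqref{key estimate}: the factor $m^{-\lambda/(1+\kappa)}$ makes the first $m$-sum converge, the second produces the $\log$, and the term $(D^{2}/m)/(hx)$ gives $x^{-1}D^{2}$. This per-$m$ treatment is precisely what your global Vaaler expansion discards.

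The gap in your route is quantitative, not just bookkeeping. Because of the weight $g(m)/m$, the dominant dyadic boxes have $M\asymp 1$ and $N\asymp D$ (or the reverse after your swap), and there the secondary branches of \eqref{S_delta-z} do not fit inside \eqref{key estimate}. Reinstating the weight $1/(H_{0}M)$, the branch $HM^{1/2}N$ contributes about $N/M^{1/2}\asymp D$ when $M\asymp1$, which for $(\kappa,\lambda)=(1/2,1/2)$ and $D=x^{1/2}$ is $x^{1/2}$, far larger than the claimed $x^{1/3}+x^{1/2}D^{-1/2}\log x+x^{-1}D^{2}\asymp x^{1/3}$; the branch $X^{-1/2}HMN$ with $X=xH_{0}/D$ contributes $x^{-1/2}H_{0}^{-1/2}M^{-1}D^{3/2}$, which at $H_{0}=M=1$ exceeds $x^{-1}D^{2}$ for every $D\le x$ and, at $D=x^{3/4}$, exceeds the entire right-hand side of \eqref{key estimate}. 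So the assertion that these branches ``merge'' into the stated terms fails in exactly the ranges that dominate, and no choice of $H$ repairs this (a single global $H$ must also control the Vaaler remainder $D\log D/H$); note too that Lemma~\ref{lem:2} requires $H\le M$, which is vacuous when $M\asymp1$. The remedy is to abandon the bilinear machinery here and argue with $m$ fixed, as above. (Separately, the second case of the statement should read $g(n)\ll_{\varepsilon}n^{\varepsilon}$; the repeated ``$g(n)\ll1$'' is a typo in the paper.)
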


\begin{proof}
	From \eqref{f(d)/d}, then we can be written \eqref{bliinear forms} as
	$$
	\mathfrak{S}_{\delta}^{f}(x,D)=\sum_{m\le 2D}\frac{g(m)}{m}\sum_{D/m<n\le 2D/m}\psi\left(\frac{x}{mn+\delta}\right).
	$$
	Applying lemma \ref{lem:1} and noticing  that $0<\Phi(t)<1$ for $0<|t|<1$ and $g(m)\ll m^\varepsilon$ for $m\geq1$, 
we can derive
	\[
	\mathfrak{S}_{\delta}^{f}(x,D)
\ll\sum_{m\le  2D}\frac {g(m)}{m}
\bigg(\frac{(D/m)}{H}+\sum_{1\le h\le  H}\frac 1h \bigg|\sum_{D/m<n\le  2D/m}e \bigg(   \frac{hx}{mn+\delta}  \bigg)\bigg|\bigg)
	\]
	for $1\le  H\le  D/m$. 
Applying the exponent pair $(\kappa,\lambda)$ to the sum over $n$, 
we find that
	\begin{equation*}
		\begin{split}
			\mathfrak{S}_{\delta}^{f}(x,D)&\ll\sum_{m\le  2D}\frac {g(m)}{m}    \bigg(\frac{(D/m)}{H}+\sum_{1\le h\le  H}\frac 1h
\bigg\{ \bigg(   \frac{hx}{D^2/m}\bigg)^\kappa(D/m)^\lambda+\frac{(D^2/m)}{hx}  \bigg\}\bigg)\\
			&\ll\sum_{m\le  2D}\frac{g(m)}{m}  \bigg(  (D/m)H^{-1}+x^\kappa H^\kappa (D^2/m)^{-\kappa}(D/m)^\lambda+x^{-1}(D^2/m)   \bigg)
		\end{split}
	\end{equation*}
	for all $H\in[1,D/m]$. Optimising the parameter $H$ over $[1,D/m]$ we obtain
	\[
	\mathfrak{S}_{\delta}^{f}(x,D)\ll\sum_{m\le  2D}\frac {g(m)}{m}
	\big( (x^\kappa D^{\lambda-\kappa} m^{-\lambda})^{1/(1+\kappa)}     +x^\kappa (D^2/m)^{-\kappa}(D/m)^\lambda+x^{-1}(D^2/m)\big),
	\]
	which implies \eqref{key estimate} and \eqref{key estimate}.
\end{proof}

\vskip3mm
\section{Proof of Theorem  \ref{thm1}}

Let $f=\text{id} \ast g$ and $N_{f}\in [1,x^{\frac{1}{3}})$ be a  parameter to be chosen later.
Firstly, we write
\begin{equation}\label{1S3+S4}
\sum_{n\leqslant x}\frac{f([x/n])}{[x/n]}=S_{f}^{\dag}(x)+S_{f}^{\sharp}(x)
\end{equation}
with
\begin{equation*}
S_{f}^{\dag}(x):=\sum_{n\leqslant N_{f}}\frac{f([x/n])}{[x/n]},\qquad
S_{f}^{\sharp}(x):=\sum_{N_{f}<n \leqslant x}\frac{f([x/n])}{[x/n]}.
\end{equation*}

\vskip 1mm

Secondly, we  bound  $S_{f}^{\sharp}(x)$.
Put $m=[x/n]$. Then $x/(m+1)<n\leqslant x/m$.
Thus
\begin{equation}\label{S4fenjie1}
\begin{split}
S_{f}^{\sharp}(x)
&=\sum_{N_{f}<n\leqslant x}\frac{f([x/n])}{[x/n]}=\sum_{m\leqslant x/N_{f}}\frac{f(m)}{m}\sum_{x/(m+1)<n\leqslant x/m}1\\
&=\sum_{m\leqslant x/N_{f}}\frac{f(m)}{m}\left(\frac{x}{m}-\psi\left(\frac{x}{m}\right)-\frac{x}{m+1}+\psi\left(\frac{x}{m+1}\right)\right)\\
&=x\sum_{m\geqslant{} 1}\frac{f(m)}{m^{2}(m+1)}  -  {R}_{0}^{f}(x)  +  {R}_{1}^{f}(x)   + O(N_{f}),
\end{split}
\end{equation}
where  \begin{equation*}
	R_{\delta}^{f}(x):=\sum_{N_{f}<m\leq{x}/{N}_{f}}\frac{f(m)}{m}\psi \left(\frac{x}{m+\delta}\right)
\end{equation*}
for $\delta=0,1$, and we have used the following bounds
\begin{equation*}
\begin{split}
&x\sum_{m>{x}/{N_{f}}}\frac{f(m)}{m^{2}(m+1)}%\ll_{\varepsilon} x\sum_{m>{x}/{N_{f}}}\frac{m}{m^{2}(m+1)}
\ll N_{f},
\qquad
\sum_{m\leqslant N_{f}}\frac{f(m)}{m}\psi \left(\frac{x}{m+\delta}\right)%\ll_{\varepsilon} \sum_{m\leqslant N_f}\frac{f(m)}{m}
\ll N_{f}.
\end{split}
\end{equation*}
Writing $D_j:=x/(2^jN_f)$, we have $N_f\le D_j\le x/N_f\le x$ for $0\le j\le \log(x/N_f^2)/\log 2$.
Thus we can apply \eqref{key estimate} of Proposition \ref{proposition} with  $(\kappa, \lambda) = (1/2, 1/2)$  to get
\begin{equation*}\label{R_delta-f2}
	\begin{split}
		|R_{\delta}^{f}(x)|
		&\le   \sum_{0\le   j \le  {\log(x/N_{f}^{2})}/{\log 2}}|\mathfrak{S}_{\delta}^{f}(x,D_{j})|
\\
&\ll   \sum_{0\le   j \le  {\log(x/N_{f}^{2})}/{\log 2}}
(  x^{1/3}       + x^{1/{2}}D_j^{-1/2} \log x     + x^{-1}D_j^{2}  )
\\
		&\ll                    (x^{1/3}     +  x^{1/{2}}N_f^{-1/2}     + x N_f^{-2}  )\log x.
	\end{split}
\end{equation*}
Inserting this into \eqref{S4fenjie1} and taking  $N_{f}=x^{ 1/3 }$, we derive
\begin{equation}\label{1S4}
	S_{f}^{\sharp}(x)=x\sum_{m\geqslant{} 1}\frac{f(m)}{m^{2}(m+1)}+O(x^{1/3} \log x ).
\end{equation}

Finally,  $ {f(d)}/{d}=\sum_{mn=d} {g(m)}/{m}\le \sum_{m|d} \frac {1}{m} \le \log d$
implies  $S_f^{\dag}(x)\ll N_f \log x $.
Inserting this and \eqref{1S4} into \eqref{1S3+S4}, we  get \eqref{jielun1}.

\vskip3mm
\section{Proof of Theorem  \ref{thm2}}

Let $f=\text{id} \ast g$ and $N_{f}\in [1,x^{\frac{1}{3}})$ be a  parameter to be chosen later.
Firstly, we write
\begin{equation}\label{S3+S4}
\sum_{n\leqslant x}\frac{f([x/n])}{[x/n]}=S_{f}^{\dag}(x)+S_{f}^{\sharp}(x)
\end{equation}
with
\begin{equation*}
S_{f}^{\dag}(x):=\sum_{n\leqslant N_{f}}\frac{f([x/n])}{[x/n]},\qquad
S_{f}^{\sharp}(x):=\sum_{N_{f}<n \leqslant x}\frac{f([x/n])}{[x/n]}.
\end{equation*}

\vskip 1mm

Secondly, we  bound  $S_{f}^{\sharp}(x)$.
Put $m=[x/n]$. Then $x/(m+1)<n\leqslant x/m$.
Thus
\begin{equation}\label{S4fenjie}
\begin{split}
S_{f}^{\sharp}(x)
&=\sum_{N_{f}<n\leqslant x}\frac{f([x/n])}{[x/n]}=\sum_{m\leqslant x/N_{f}}\frac{f(m)}{m}\sum_{x/(m+1)<n\leqslant x/m}1\\
%&=\sum_{m\leqslant x/N_{f}}\frac{f(m)}{m}\left(\frac{x}{m}-\psi\left(\frac{x}{m}\right)-\frac{x}{m+1}+\psi\left(\frac{x}{m+1}\right)\right)\\
&=x\sum_{m\geqslant{} 1}\frac{f(m)}{m^{2}(m+1)}+ O(N_{f}x^{\varepsilon})-{R}_{0}^{f}(x)+{R}_{1}^{f}(x),
\end{split}
\end{equation}
where  \begin{equation*}
	R_{\delta}^{f}(x):=\sum_{N_{f}<m\leq{x}/{N}_{f}}\frac{f(m)}{m}\psi \left(\frac{x}{m+\delta}\right)
\end{equation*}
for $\delta=0,1$, and we have used the following bounds
\begin{equation*}
\begin{split}
&x\sum_{m>{x}/{N_{f}}}\frac{f(m)}{m^{2}(m+1)}\ll_{\varepsilon} x\sum_{m>{x}/{N_{f}}}\frac{m^{1+\varepsilon}}{m^{2}(m+1)}\ll N_{f}x^{\varepsilon},\\
&\sum_{m\leqslant N_{f}}\frac{f(m)}{m}\psi \left(\frac{x}{m+\delta}\right)\ll_{\varepsilon} \sum_{m\leqslant N_f}\frac{f(m)}{m}\ll N_{f}x^{\varepsilon}.
\end{split}
\end{equation*}
Writing $D_j:=x/(2^jN_f)$, we have $N_f\le D_j\le x/N_f\le x$ for $0\le j\le \log(x/N_f^2)/\log 2$.
Thus we can apply \eqref{key estimate2} of Proposition \ref{proposition} with  $(\kappa, \lambda) = (1/2, 1/2)$  to get
\begin{equation*}\label{R_delta-f}
	\begin{split}
		|R_{\delta}^{f}(x)|
		&\le   \sum_{0\le   j \le  {\log(x/N_{f}^{2})}/{\log 2}}|\mathfrak{S}_{\delta}^{f}(x,D_{j})|
\\
&\ll   \sum_{0\le   j \le  {\log(x/N_{f}^{2})}/{\log 2}}
(  x^{1/3}       + x^{1/{2}}D_j^{-1/2} \log x     + x^{-1}D_j^{2}  )D_j^\varepsilon
\\
		&\ll                    (x^{1/3}     +  x^{1/{2}}N_f^{-1/2}     + x N_f^{-2}  )    x^\varepsilon.
	\end{split}
\end{equation*}
Inserting this into \eqref{S4fenjie} and taking  $N_{f}=x^{ 1/3 }$, we derive
\begin{equation}\label{S4}
	S_{f}^{\sharp}(x)=x\sum_{m\geqslant{} 1}\frac{f(m)}{m^{2}(m+1)}+O(x^{1/3+\varepsilon} ).
\end{equation}

Finally,  $ {f(d)}/{d}=\sum_{mn=d} {g(m)}/{m}\le \sum_{m|d} \frac {m^\varepsilon}{m} \le d^{\varepsilon}$
implies  $S_f^{\dag}(x)\ll N_f x^{\varepsilon}$.
Inserting this and \eqref{S4} into \eqref{S3+S4}, we  get \eqref{jielun2}.

\vskip 5mm
\noindent\textbf{Acknowledgments}\
This work is supported  by the National Natural Science Foundation of China (Grant No. 11771252).

\vskip 8mm

\end{document}